\definecolor{USred}{rgb}{0.74,0.1,0.1}
\definecolor{USblue}{rgb}{0.2,0.2,0.7}
\definecolor{green1}{cmyk}{0.82,0,1,0.3}
\definecolor{Royalblue}{cmyk}{1,0.30,0.2,0.2}
\newcommand{\Zs}{\mathbb{Z}}
\newcommand{\Es}{\mathbb{E}}
\newcommand{\Rs}{\mathbb{R}}
\newcommand{\al}[1]{\begin{align} #1\end{align}}
\newcommand{\nn}{\nonumber}
\DeclareMathOperator{\tr}{tr}
\DeclareRobustCommand{\vect}[1]{
	\ifcat#1\relax
	\boldsymbol{#1}
	\else
	\mathbf{#1}
	\fi}
\newtheorem{proposition}{Proposition}
\newcommand{\xb}{\mathbf{x}}
\newcommand{\yb}{\mathbf{y}}
\newcommand{\tb}{\mathbf{t}}
\newcommand{\ssb}{\mathbf{s}}
\newcommand{\hb}{\mathbf{h}}
\newcommand{\eb}{\mathbf{e}}
\newcommand{\ub}{\mathbf{u}}
\newcommand{\vb}{\mathbf{v}}
\newcommand{\tth}{\vartheta}
\newcommand{\thb}{\bm{\vartheta}}
\newcommand{\ab}{\bm{\alpha}}
\newcommand{\bb}{\bm{\beta}}
\newcommand{\gb}{\bm{\gamma}}
\newcommand{\zzb}{\mathbf{z}}
\newcommand{\Pc}{\mathcal{P}}
\newcommand{\Nb}{\mathbf{N}}
\newcommand{\ip}[2]{\langle #1,#2 \rangle}
\newcommand{\Ts}{\mathbb{T}}
\newcommand{\dm}{\mathrm{d}\mu}
\newcommand{\Sc}{\mathcal{S}}
\newcommand{\lb}{\bm{\ell}}
\newcommand{\zb}{\bm{\zeta}}
\newcommand{\Ns}{\mathbb{N}}
\title{\LARGE \bf Optimal Transport between Gaussian random fields}
\author{Mattia Zorzi 
\thanks{}
\thanks{M. Zorzi is with the Department of Information Engineering, University of Padova, Padova, Italy; email:	 
	   {\tt\small zorzimat@dei.unipd.it}.}%
\thanks{This work was partially supported by the SID project ``A Multidimensional and Multivariate Moment Problem Theory for Target Parameter Estimation in Automotive Radars'' (ZORZ\_SID19\_01) funded by the Department of Information Engineering of the  University of Padova.}%
}
\begin{document}

\maketitle
\thispagestyle{empty}
\pagestyle{empty}

\begin{abstract} We consider the optimal transport problem between zero mean Gaussian stationary random fields both in the aperiodic and periodic case. We show that the solution corresponds to a weighted Hellinger distance between the multivariate and multidimensional power spectral densities of the random fields. Then, we show that  such a distance defines a geodesic, which depends on the weight function, on the manifold of the  multivariate and multidimensional power spectral densities.
\end{abstract}


\section{Introduction} 
The Optimal Transport Problem (OTP) aims in minimizing the effort to transport one nonnegative measure to another nonnegative measure according to a cost  of moving mass from a point to another one.  
This problem has been formulated by Kantorovitch \cite{kantorovich1942translocation} and in the recent years it has been 
used for deriving new distances between covariance matrices and spectral densities, \cite{knott1984optimal,georgiou2008metrics,elvander2018interpolation,ning2014matrix,chen2017matricial}. In particular, in \cite{zorzi2020optimal} it has been shown that the OTP  between Gaussian stationary stochastic processes leads to weighted Hellinger distance between multivariate and unidimensional power spectral densities. The latter distance is a generalization of the Hellinger distance introduced in \cite{Hellinger_Ferrante_Pavon,ramponi2009globally}.

Distances between spectral densities play a fundamental 
role in spectral analysis. Indeed, the latter can be used in order to design high resolution spectral estimators \cite{A_NEW_APPROACH_BYRNES_2000,georgiou2003kullback,BYRNES_GUSEV_LINDQUIST_RATIONAL_COV_EXT,karlsson2013uncertainty,ALPHA} as well the multivariate extensions \cite{FERRANTE_TIME_AND_SPECTRAL_2012,BETAPRED,BETA,DUAL,ZORZI2019108516,ZORZI2020109053}. These methods have been extended to: 1) stationary 
(i.e. homogeneous) random fields which are characterized by  multidimensional power spectral densities  \cite{georgiou2006relative,SIAM_IM_COMPR,ringh2018multidimensional,ZFKZ2019fusion}; 2) stationary periodic random fields which are characterized by multidimensional power spectral densities whose domain is constituted by a finite number of points \cite{ringh2015multidimensional,zhu2019m}. It is worth noting that in the unidimensional case, the latter case boils down to the so called reciprocal processes,  
\cite{levy1990modeling,CFPP-11,lindquist2013circulant,lindquist2013multivariate, RecS}.

The aim of this paper is to extend the results in \cite{zorzi2020optimal} to Gaussian stationary aperiodic/periodic random fields. More precisely, we formulate the OTP and we show that the corresponding solution is a suitable weighted Hellinger distance between multivariate and multidimensional spectral densities. Moreover, we show this  distance defines a geodesic on the manifold of the multidimensional power spectral densities.  The latter can be used in order to perform spectral morphing \cite{NING_2013} for describing a Gaussian random field whose description slowly varies over time.

The outline of the paper is the  following. In Section \ref{sec_1} we introduce the OTP for Gaussian random fields. In Section \ref{sec_2} we introduce the OTP for Gaussian periodic random fields. Section \ref{sec_3} regards the spectral morphing problem and in Section \ref{sec_4} we present a numerical example. In Section \ref{sec_4b} we discuss the general case, i.e. the Gaussian assumption is not required. Finally, some conclusions are drawn in Section  \ref{sec_5}.

{\em Notation:} $\Rs$, $\Zs$, $\Ns$ denote the set of real, integer and natural numbers, respectively. Given two vectors $\ab$ and $\thb$ of the same dimension, then $\ip{\ab}{\thb}$ denotes their inner product. Let $A$ be an Hermitian matrix, then $A>0$ ($A\geq 0$) means that $A$ is positive (semi)definite; $A^*$ denotes its transposed and conjugate. Moreover, we will consider the Euclidean norms $\|A\|:=\sqrt{\tr(A^*A)}$ and $ \|A\|_W:=\sqrt{\tr(A^*WA)}$ with $W=W^*>0$. Given a function $\Phi(e^{j\thb})$ with $\Ts^d:=[0,2\pi]^d$, such that $\Phi(e^{j\thb})=\Phi(e^{j\thb})^*$, then $\Phi>0$ ($\Phi\geq 0$) means that $\Phi(e^{j\thb})>0$ ($\Phi(e^{j\thb})\geq 0$) for any $\thb\in \Ts^d$. $\ell_1^{m\times m}(\Zs^d)$ is the space of sequences $\hb:=\{\,H_\tb, \; \tb\in\Zs^d\, \}$, with $H_\tb\in\Rs^{m\times m}$, which are absolutely summable. Given two sequences $\hb$ and $\vb$, then $\hb\star \vb$ denotes the discrete convolution operation.

  \section{OTP between random fields}\label{sec_1}
Consider two jointly Gaussian stationary random fields $\xb=
\{\xb_\tb, \; \tb\in \Zs^d\}$ and $\yb=\{\yb_\tb,\; \tb\in\Zs^d\}$ having zero mean and taking values in $\Rs^m$. It is worth noting that the index $\tb=(t_1,t_2,\ldots t_d)$ has dimension $d$. These random fields are completely characterized by the finite dimensional probability density functions 
$$p_\xb(\xb_\tb,\xb_\ssb; \tb, \ssb), \; \; p_\yb(\yb_\tb,\yb_\ssb; \tb, \ssb)$$
 with $\tb,\ssb\in\Zs^d$, while the corresponding joint random field is completely characterized by the finite dimensional probability density 
$$p_{\xb,\yb}(\xb_\tb,\xb_\ssb,\yb_\ub,\yb_\vb; \tb, \ssb,\ub, \vb)$$ with $\tb,\ssb,\ub,\vb\in\Zs^d$.

We consider the following optimal transport problem 
\al{\label{pb_trasp}d(p_\xb,p_\yb)^2= \underset{p_{\xb,\yb}\in \Pc}{\inf} \{\Es[\|  \xb_\tb- \yb_\tb\|^2]  \hbox{ s.t. (\ref{cond_marg1})-(\ref{cond_marg2}) hold}  \} }
where
 \al{\label{cond_marg1}\int_{\Rs^m}\int_{\Rs^m} p_{\xb,\yb}(\xb_\tb, \xb_\ssb,\yb_\ub, & \yb_\vb;\tb,\ssb,\ub,\vb)\mathrm d \yb_\ub \mathrm d \yb_\vb\nn\\ &= p_{\xb}(\xb_\tb, \xb_\ssb;\tb,\ssb),\; \; \tb,\ssb\in\mathbb Z^d \\
\label{cond_marg2} \int_{\Rs^m}\int_{\Rs^m} p_{\xb,\yb}(\xb_\tb,\xb_\ssb,\yb_\ub ,& \yb_\vb;\tb,\ssb,\ub,\vb)\mathrm d \xb_\tb \mathrm d \xb_\ssb\nn\\ &= p_{\yb}(\yb_\ub,\yb_\vb;\ub,\vb),\; \; \ub,\vb\in\mathbb Z^d}
and $\Pc$ is the set of Gaussian joint probability densities $p_{\xb,\yb}$. In plain words, the above problem represents the optimal transport between Gaussian random fields $\xb$ and $\yb$ and the transportation cost is the variance of $\eb:=\xb-\yb$ which can be understood as the discrepancy random field.

Since the joint random field is Gaussian, it is completely characterized by its covariance field
$$ R_\tb=R_{-\tb}^T:=\Es\left[\left[\begin{array}{c}\xb_{\tb+\ssb} \\ \yb_{\tb+\ssb}\end{array}\right] \left[\begin{array}{cc}\xb_\ssb^T & \yb_\ssb^T\end{array}\right]\right], \; \; \tb\in\Zs^d$$
or, equivalently, by its discrete-time multidimensional Fourier transform
\al{\label{def_Phi_xy}\Phi(e^{j\thb}):=\sum_{\tb\in\Zs^d} R_\tb e^{-j \ip{\thb}{\tb}}}
where $\thb=(\tth_1,\tth_2,\ldots, \tth_d)\in \Ts^d$ and it represents the power spectral density of the joint process. Partitioning (\ref{def_Phi_xy}) in a conformable way with respect to $\xb$ and $\yb$, we obtain:
$$ \Phi(e^{j\thb})= \left[\begin{array}{cc} \Phi_{\xb}(e^{j\thb}) &  \Phi_{\xb,\yb}(e^{j\thb}) \\ \Phi_{\yb,\xb}(e^{j\thb}) & \Phi_{\yb}(e^{j\thb}) \end{array}\right]$$ where $\Phi_{\xb}$ and $\Phi_{\yb}$ are the power spectral densities of $\xb$ and $\yb$, respectively.

Since $p_{\xb,\yb}$ and $\Phi$ represent two equivalent descriptions of the joint process, we want to rewrite (\ref{pb_trasp}) in terms of $\Phi$. We have 
 \al{\Es[\|  \xb_\tb- \yb_\tb\|^2] &=\tr \Es[  \xb_\tb\xb_\tb^T +\yb_\tb\yb_\tb^T-\xb_\tb\yb_\tb^T-\yb_\tb\xb_\tb^T]\nn\\
 &=\tr \int_{\Ts^d} (\Phi_\xb +\Phi_\xb-\Phi_{\xb,\yb}-\Phi_{\yb,\xb})\dm}
where $$ \dm (\thb)=\frac{1}{(2\pi)^d }\prod_{k=1}^d \mathrm{d}\tth_k.$$ Then conditions (\ref{cond_marg1}) and (\ref{cond_marg2}) imposes that $\Phi_{\xb}$ and $\Phi_\yb$ are fixed. Accordingly, we obtain the optimal transport problem
\al{ \label{pb_psd}d(p_\xb,p_\yb)^2= &\underset{\Phi_{\xb\yb}  }{\inf}   \tr\int_{\Ts^d} (\Phi_\xb +\Phi_\yb - \Phi_{\xb \yb} -\Phi_{\yb \xb} ) \dm    \nn\\ &    \hbox{ s.t. }  \left[\begin{array}{cc}\Phi_\xb & \Phi_{\xb \yb}\\ \Phi_{\yb\xb} & \Phi_\yb\end{array}\right]   \geq  0 .}
 In what follows, we assume that $\Phi_\xb,\Phi_\yb\in \Sc_m^+(\Ts^d)$ where $\Sc_m^+(\Ts^d)$ denotes the set of multivariate and multidimensional power spectral densities which are bounded and coercive. 
\begin{proposition} \label{prop_1}It holds that
\al{\label{Hell_classical}d(p_\xb,p_\yb)^2=\tr\int_{\Ts^d} (\Phi_\xb+\Phi_\yb-2(\Phi_\yb^{1/2} \Phi_\xb \Phi_\yb^{1/2})^{1/2} )\dm} that is $d(p_\xb,p_\yb)$ is the Hellinger distance between $\Phi_\xb$ and $\Phi_\yb$.  
\end{proposition}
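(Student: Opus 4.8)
The plan is to reduce the variational problem (\ref{pb_psd}) to a pointwise matrix optimization, solved separately at each frequency $\thb\in\Ts^d$. Since the marginal constraints fix $\Phi_\xb$ and $\Phi_\yb$, minimizing the objective in (\ref{pb_psd}) is equivalent to \emph{maximizing} $\tr\int_{\Ts^d}(\Phi_{\xb\yb}+\Phi_{\yb\xb})\dm$. Because $\Phi$ is Hermitian we have $\Phi_{\yb\xb}=\Phi_{\xb\yb}^*$, so the integrand equals $2\,\Re\,\tr\,\Phi_{\xb\yb}(e^{j\thb})$, and the positivity constraint acts \emph{independently} at each $\thb$. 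Hence it suffices to solve, for fixed Hermitian $A:=\Phi_\xb(e^{j\thb})>0$ and $B:=\Phi_\yb(e^{j\thb})>0$, the finite-dimensional problem of maximizing $\Re\,\tr\,C$ over complex $m\times m$ matrices $C$ subject to the positivity of the block matrix appearing in (\ref{pb_psd}), and then to integrate the resulting value over $\Ts^d$.

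For the matrix problem, coercivity of the densities guarantees $B>0$, so the Schur complement rewrites the block constraint as $A-CB^{-1}C^*\geq 0$. I would then substitute $C=A^{1/2}KB^{1/2}$, under which the constraint becomes $A^{1/2}(I-KK^*)A^{1/2}\geq 0$, i.e. $K$ is a contraction, while the objective becomes $\Re\,\tr\,C=\Re\,\tr(B^{1/2}A^{1/2}K)$. Setting $M:=B^{1/2}A^{1/2}$ with singular value decomposition $M=U\Sigma V^*$ and $\tilde K:=V^*KU$ (again a contraction), the bound $|\tilde K_{ii}|\leq 1$ yields $\Re\,\tr(MK)=\sum_i\sigma_i\,\Re\,\tilde K_{ii}\leq\sum_i\sigma_i=\tr((M^*M)^{1/2})$, with equality at $\tilde K=I$, i.e. $C=A^{1/2}VU^*B^{1/2}$. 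This is von Neumann's trace inequality.

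It remains to identify the optimal value. Since $M^*M=A^{1/2}BA^{1/2}$ and the matrices $A^{1/2}BA^{1/2}$, $B^{1/2}AB^{1/2}$, $AB$, $BA$ all share the same spectrum, their positive square roots have equal trace, so the pointwise maximum of $2\,\Re\,\tr\,C$ equals $2\,\tr((\Phi_\yb^{1/2}\Phi_\xb\Phi_\yb^{1/2})^{1/2})$. The optimal $C(e^{j\thb})$ is a continuous function of the (bounded, coercive) entries of $A$ and $B$, hence bounded and measurable, so it defines an admissible $\Phi_{\xb\yb}$ and the pointwise optimum integrates to the global one. Substituting back gives
\[
d(p_\xb,p_\yb)^2=\tr\int_{\Ts^d}\big(\Phi_\xb+\Phi_\yb-2(\Phi_\yb^{1/2}\Phi_\xb\Phi_\yb^{1/2})^{1/2}\big)\dm,
\]
which is exactly (\ref{Hell_classical}).

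I expect the main obstacle to be the matrix trace maximization — establishing $\max_{\|K\|\leq1}\Re\,\tr(MK)=\tr((M^*M)^{1/2})$ together with the equality of the various square-root traces — rather than the reduction to a pointwise problem, which is routine once Hermiticity and coercivity are invoked; a secondary technical point is checking that the pointwise optimizer is genuinely admissible.
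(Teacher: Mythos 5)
Your proof is correct and takes essentially the same route as the paper, whose own proof consists of the Schur-complement reduction of (\ref{pb_psd}) to a pointwise trace maximization followed by an appeal to Proposition 1 of \cite{zorzi2020optimal}; your contraction parametrization $C=A^{1/2}KB^{1/2}$ combined with von Neumann's trace inequality is precisely the standard argument behind that citation, here simply carried out on $\Ts^d$ instead of $\Ts$. One small tightening: since SVD factors are not continuous in $(A,B)$, justify measurability of the optimizer via its closed form $K=M^*(MM^*)^{-1/2}$ (the polar factor, well defined and continuous because coercivity makes $M=B^{1/2}A^{1/2}$ invertible), which also makes it immediate that $C(e^{-j\thb})=\overline{C(e^{j\thb})}$, so the optimal $\Phi_{\xb\yb}$ indeed corresponds to a real-valued Gaussian joint field.
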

\begin{proof}  It is not difficult to see that (\ref{pb_psd}) is equivalent to solve  \al{ &\underset{\Phi_{\xb\yb}  }{\inf}   -2\tr\int_{\Ts^d}  \Phi_{\xb \yb} \dm    \nn\\ &    \hbox{ s.t. }  \Phi_\xb- \Phi_{\xb \yb}\Phi_\yb^{-1} \Phi_{\yb\xb}   \geq  0. } Then, the proof follows the ideas of one of Proposition 1 in \cite{zorzi2020optimal} for Gaussian stationary processes. The main difference is the fact that here we have multidimensional power spectral densities, while there we have unidimensional power spectral densities.
\end{proof}

In Problem (\ref{pb_trasp}) we can consider a weighted function, that is 
\al{\label{pb_traspW}d_\Omega(p_\xb,p_\yb)^2= \underset{p_{\xb,\yb}\in \Pc}{\inf} \{\Es[\| \hb\star( \xb- \yb)_\tb\|^2]  \hbox{ s.t. (\ref{cond_marg1})-(\ref{cond_marg2}) hold}  \} }
where $\hb:=\{H_\tb,; \; \tb\in\Zs^d\}$, $H_\tb\in \Rs^{m\times m}$ and such that $\hb\in \ell_1^{m\times m}(\Zs^d)$. Then, the latter admits the multidimensional Fourier transform, 
$$ H(e^{j\thb})=\sum_{\tb\in\Zs^d} H_\tb e^{-j\ip{\thb}{\tb}}, \; \; \thb\in\Ts^d.$$ 
In plain words, in (\ref{pb_traspW}) we consider as cost the variance of random field which is obtained by filtering through $\hb$ the discrepancy random field. It is not difficult to see \al{\Es &[\|  \hb\star( \xb- \yb)_\tb\|^2]\nn\\  &=\tr \Es[  (\hb\star \xb)_\tb(\hb\star\xb)_\tb^T +(\hb\star\yb)_\tb(\hb\star\yb)_\tb^T\nn \\ &\hspace{0.3cm}-(\hb\star\xb)_\tb(\hb\star\yb)_\tb^T-(\hb\star \yb)_\tb(\hb\star\xb)_\tb^T]\nn\\
 &=\tr \int_{\Ts^d} \Omega(\Phi_\xb +\Phi_\xb-\Phi_{\xb,\yb}-\Phi_{\yb,\xb})\dm}
 where $\Omega(e^{j\thb})=H(e^{j\thb})H(e^{j\thb})^*$. Accordingly, (\ref{pb_traspW}) is equivalent to solve 
 \al{ &\underset{\Phi_{\xb\yb}  }{\inf}   -2\tr\int_{\Ts^d}  \Omega \Phi_{\xb \yb} \dm    \nn\\ &    \hbox{ s.t. }  \Phi_\xb- \Phi_{\xb \yb}\Phi_\yb^{-1} \Phi_{\yb\xb}   \geq  0. }
 
\begin{proposition} It holds that
\al{\label{Hellinger_w}d_\Omega&(p_\xb,p_\yb)^2\nn\\ &=\tr\int_{\Ts^d} (\Omega \Phi_\xb+\Omega \Phi_\yb-2(\Phi_\yb^{1/2} \Omega\Phi_\xb \Omega\Phi_\yb^{1/2})^{1/2} )\dm} that is $d_\Omega(p_\xb,p_\yb)$ is the weighted Hellinger distance between $\Phi_\xb$ and $\Phi_\yb$ with weight function $\Omega$.  
\end{proposition}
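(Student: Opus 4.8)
The plan is to mimic the pointwise variational argument underlying Proposition~\ref{prop_1}, now carrying the Hermitian positive semidefinite weight $\Omega = HH^*$ through the computation. Since $\Phi_\xb,\Phi_\yb$ are held fixed by the marginal constraints, minimizing the cost is equivalent to maximizing $\tr\int_{\Ts^d}\Omega(\Phi_{\xb\yb}+\Phi_{\yb\xb})\dm$; using $\Phi_{\yb\xb}=\Phi_{\xb\yb}^*$, $\Omega=\Omega^*$ and cyclicity of the trace, the integrand equals $2\,\Re\tr(\Omega\Phi_{\xb\yb})$, and the conjugate symmetry $\Phi_{\xb\yb}(e^{-j\thb})=\overline{\Phi_{\xb\yb}(e^{j\thb})}$ makes the integral real, which is exactly why the simplified objective $-2\tr\int_{\Ts^d}\Omega\Phi_{\xb\yb}\dm$ may be used. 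Because the constraint $\Phi_\xb-\Phi_{\xb\yb}\Phi_\yb^{-1}\Phi_{\yb\xb}\geq 0$ is a frequency-wise Schur-complement condition, the optimization decouples over $\thb$, so it suffices to solve, at each $\thb\in\Ts^d$, the problem of maximizing $\Re\tr(\Omega C)$ over $C\in\CC^{m\times m}$ subject to $\Phi_\xb-C\Phi_\yb^{-1}C^*\geq 0$.

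First I would diagonalize the constraint through the change of variable $K:=\Phi_\xb^{-1/2}C\Phi_\yb^{-1/2}$, well defined since $\Phi_\xb,\Phi_\yb\in\Sc_m^+(\Ts^d)$ are coercive. A direct computation gives $\Phi_\xb-C\Phi_\yb^{-1}C^*=\Phi_\xb^{1/2}(I-KK^*)\Phi_\xb^{1/2}$, so feasibility is precisely the contraction condition $KK^*\leq I$. The objective rewrites as $\Re\tr(\Omega C)=\Re\tr(MK)$ with $M:=\Phi_\yb^{1/2}\Omega\Phi_\xb^{1/2}$.

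Next I would apply the standard trace/SVD duality: for $\|K\|\leq 1$ one has $\Re\tr(MK)\leq \tr((MM^*)^{1/2})$, with equality attained by the unitary polar factor $K=VU^*$ from an SVD $M=U\Sigma V^*$. Since $MM^*=\Phi_\yb^{1/2}\Omega\Phi_\xb\Omega\Phi_\yb^{1/2}$, the pointwise maximum equals $\tr\big((\Phi_\yb^{1/2}\Omega\Phi_\xb\Omega\Phi_\yb^{1/2})^{1/2}\big)$. Integrating over $\Ts^d$ and inserting the result into $d_\Omega(p_\xb,p_\yb)^2=\tr\int_{\Ts^d}\Omega(\Phi_\xb+\Phi_\yb)\dm-2\max$ yields (\ref{Hellinger_w}); the choice $\Omega=I$ recovers Proposition~\ref{prop_1}.

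The main obstacle I anticipate is not the scalar inequality but verifying that the decoupled optimum is globally admissible: one must check that the pointwise optimizer $C(\thb)=\Phi_\xb^{1/2}VU^*\Phi_\yb^{1/2}$ can be assembled into a measurable cross-spectrum that respects the conjugate symmetry $\Phi_{\xb\yb}(e^{-j\thb})=\overline{\Phi_{\xb\yb}(e^{j\thb})}$ required for a real covariance field, so that the interchange of the infimum with the frequency-wise maximization is legitimate and the bound is attained within the admissible class. As in \cite{zorzi2020optimal}, this is resolved by exhibiting such an optimizer explicitly from the polar factorization, the only genuine new ingredient relative to the unidimensional case being that the optimization is carried out over the $d$-dimensional torus $\Ts^d$.
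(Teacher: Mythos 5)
Your proposal is correct and follows essentially the same route as the paper, which reduces the problem to the Schur-complement constrained maximization of $-2\tr\int_{\Ts^d}\Omega\Phi_{\xb\yb}\dm$ and then invokes the argument of Proposition~1 in \cite{zorzi2020optimal}: your change of variables $K=\Phi_\xb^{-1/2}\Phi_{\xb\yb}\Phi_\yb^{-1/2}$, the contraction condition $KK^*\leq I$, and the von Neumann/polar-factor bound $\Re\tr(MK)\leq\tr((MM^*)^{1/2})$ with $M=\Phi_\yb^{1/2}\Omega\Phi_\xb^{1/2}$ are exactly the pointwise mechanism behind that reference, adapted here from $\Ts$ to $\Ts^d$. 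Your attention to assembling the pointwise optimizer into an admissible cross-spectrum with the conjugate symmetry $\Phi_{\xb\yb}(e^{-j\thb})=\overline{\Phi_{\xb\yb}(e^{j\thb})}$ is a detail the paper leaves implicit, and it is handled correctly by the explicit polar factorization.
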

\begin{proof}  The proof is similar to the one of Proposition \ref{prop_1}.
\end{proof}

 \section{OTP between periodic random fields}\label{sec_2}
Consider two jointly Gaussian stationary periodic random fields $\xb=
\{\xb_\tb, \; \tb\in \Zs^d\}$ and $\yb=\{\yb_\tb,\; \tb\in\Zs^d\}$ having zero mean, taking values in $\Rs^m$ and with period $\Nb=(N_1,N_2,\ldots , N_d)$.  This means that for any $\tb=(t_1,t_2,\ldots t_d)$ we have
$$ \yb(\tb)=\yb(t_1,\ldots, t_{l-1},t_l+N_l,t_{l+1},\ldots t_d)$$
almost surely for any $l=1\ldots d$. Accordingly, these random fields are completely characterized by the finite dimensional probability density functions 
$$p_\xb(\xb_\tb,\xb_\ssb; \tb, \ssb), \; \; p_\yb(\yb_\tb,\yb_\ssb; \tb, \ssb)$$
 with $\tb,\ssb\in\Zs^d_\Nb$  and 
 $$\Zs^d_\Nb:=\left\{\tb=(t_1,t_2,\ldots,t_d), \; \; 0\leq t_l\leq N_l-1, \; \;  l=1\ldots d\right\}.$$  The corresponding joint random field is completely characterized by  
$$p_{\xb,\yb}(\xb_\tb,\xb_\ssb,\yb_\ub,\yb_\vb; \tb, \ssb,\ub,\vb)$$ with $\tb,\ssb,\ub,\vb\in\Zs^d_N$.

We consider the following optimal transport problem 
\al{\label{pb_trasp_per}d(p_\xb,p_\yb)^2= \underset{p_{\xb,\yb}\in \Pc}{\inf} \{\Es[\|  \xb_\tb- \yb_\tb\|^2]  \hbox{ s.t. (\ref{cond_marg1p})-(\ref{cond_marg2p}) hold}  \} }
where  
 \al{\label{cond_marg1p}\int_{\Rs^m}\int_{\Rs^m} p_{\xb,\yb}(\xb_\tb, \xb_\ssb,\yb_\ub, & \yb_\vb;\tb,  \ssb, \ub,\vb) \mathrm d \yb_\ub \mathrm d \yb_\vb\nn\\ &= p_{\xb}(\xb_\tb, \xb_\ssb;\tb,\ssb), \; \; \tb,\ssb\in\mathbb Z_\Nb^d\\
\label{cond_marg2p} \int_{\Rs^m}\int_{\Rs^m} p_{\xb,\yb}(\xb_\tb,\xb_\ssb,\yb_\ub , & \yb_\vb; \tb, \ssb,\ub,\vb)\mathrm d \xb_\tb \mathrm d \xb_\ssb\nn\\ &= p_{\yb}(\yb_\ub,\yb_\vb;\ub,\vb),\; \; \ub,\vb\in\mathbb Z^d_\Nb}   
and $\Pc$ is the set of Gaussian joint probability densities $p_{\xb,\yb}$.

Since the joint random field is Gaussian, it is completely characterized by its covariance field
$$ R_\tb=R_{-\tb}^T:=\Es\left[\left[\begin{array}{c}\xb_{\tb+\ssb} \\ \yb_{\tb+\ssb}\end{array}\right] \left[\begin{array}{cc}\xb_\ssb^T & \yb_\ssb^T\end{array}\right]\right], \; \; \tb\in\Zs^d$$
which is also periodic, that is 
$$ R_\tb=R_{(t_1,t_2,\ldots,t_{l-1},t_l+N_l,t_{l+1},\ldots, t_d)}$$
for any $l=1\ldots d$.  Accordingly, its power spectral density is
\al{\label{primadef_psd_per}\Phi(\zb_{\lb}):=\sum_{\tb\in\Zs^d_\Nb} R_{\tb} \zb_{\lb}^{-\tb}  \; \; }
where $\zb=(\zeta_{\ell_1},\zeta_{\ell_2},\ldots, \zeta_{\ell_d})$, $\lb=(l_1,l_2,\ldots,l_d)\in \Zs_\Nb^d$, $\zb_{\lb}^{-\tb}=\prod_{i=1}^d \xi_{\ell_i}^{-t_i}$ and $\xi_{\ell_i}=e^{\frac{2\pi}{N_i}\ell_i}$. Thus, (\ref{primadef_psd_per}) is defined on a discretized $d$-torus and it represents the power spectral density of the joint process.  Also in this case we partition $\Phi(\zb_{\lb})$ according to $\xb$ and $\yb$:
$$ \Phi(\zb_{\lb})= \left[\begin{array}{cc} \Phi_{\xb}(\zb_{\lb}) &  \Phi_{\xb,\yb}(\zb_{\lb}) \\ \Phi_{\yb,\xb}(\zb_{\lb}) & \Phi_{\yb}(\zb_{\lb}) \end{array}\right]$$ and $\Phi_{\xb}$ and $\Phi_{\yb}$ are the power spectral densities of $\xb$ and $\yb$, respectively. Moreover, 
 \al{\Es & [\|  \xb_\tb- \yb_\tb\|^2] =\tr\Es[  \xb_\tb\xb_\tb^T +\yb_\tb\yb_\tb^T-\xb_\tb\yb_\tb^T-\yb_\tb\xb_\tb^T]\nn\\
 &= \frac{1}{|\Nb|}\tr \sum_{\lb \in\Zs_\Nb^d}  (\Phi_\xb(\zb_{\lb}) +\Phi_\xb(\zb_{\lb})-\Phi_{\xb,\yb}(\zb_{\lb})-\Phi_{\yb,\xb}(\zb_{\lb}))}
 where $|\Nb|:=\prod_{l=1}^d N_l$. Accordingly, the optimal transport problem in (\ref{pb_trasp_per}) is equivalent to 
\al{ \label{pb_psd_per}d&(p_\xb,p_\yb)^2\nn\\&= \underset{\Phi_{\xb\yb}  }{\inf}   \frac{1}{|\Nb|}\tr \sum_{\lb \in\Zs_\Nb^d}  (\Phi_\xb(\zb_{\lb}) +\Phi_\xb(\zb_{\lb})-\Phi_{\xb,\yb}(\zb_{\lb})-\Phi_{\yb,\xb}(\zb_{\lb}) )  \nn\\ &    \hbox{ s.t. } \Phi_\xb(\zb_{\lb})- \Phi_{\xb \yb}(\zb_{\lb})\Phi_\yb(\zb_{\lb})^{-1} \Phi_{\yb\xb} (\zb_{\lb})    \geq  0, \; \; \forall \, \lb\in\Zs_\Nb^d }
 where we assumed that $\Phi_\xb (\zb_{\lb})>0$ and $\Phi_\yb (\zb_{\lb})>0$ for any $\lb\in\Zs_\Nb^d$.
\begin{proposition} \label{prop_1_per}It holds that
\al{d(p_\xb,p_\yb)^2= \frac{1}{|\Nb|}&\tr \sum_{\lb \in\Zs_\Nb^d} (\Phi_\xb(\zb_{\lb})+\Phi_\yb(\zb_{\lb})\nn\\ &-2(\Phi_\yb(\zb_{\lb})^{1/2} \Phi_\xb(\zb_{\lb}) \Phi_\yb(\zb_{\lb})^{1/2})^{1/2} )\nn} that is $d(p_\xb,p_\yb)$ is the Hellinger distance between $\Phi_\xb$ and $\Phi_\yb$.  
\end{proposition}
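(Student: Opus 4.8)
The plan is to exploit the fact that, unlike the aperiodic case, the periodic spectral domain is a finite grid, so that problem (\ref{pb_psd_per}) decouples completely into independent finite-dimensional matrix optimizations. First I would observe that both the objective in (\ref{pb_psd_per}) and the constraint are separable across the grid points: the cost is a finite sum $\frac{1}{|\Nb|}\sum_{\lb\in\Zs_\Nb^d}(\cdots)$ whose $\lb$-th summand depends only on $\Phi_{\xb\yb}(\zb_{\lb})$, and the positive-semidefiniteness requirement is imposed separately at each $\zb_{\lb}$. Since there is no coupling between distinct grid points, the infimum over the whole cross-spectrum $\Phi_{\xb\yb}$ equals the sum of the minima of the individual summands; thus it suffices to solve, for each fixed $\lb$, the matrix problem of minimizing $\tr(\Phi_\xb(\zb_{\lb})+\Phi_\yb(\zb_{\lb})-\Phi_{\xb\yb}(\zb_{\lb})-\Phi_{\yb\xb}(\zb_{\lb}))$ subject to $\Phi_\xb(\zb_{\lb})-\Phi_{\xb\yb}(\zb_{\lb})\Phi_\yb(\zb_{\lb})^{-1}\Phi_{\yb\xb}(\zb_{\lb})\geq 0$.

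Next, because $\Phi_\xb(\zb_{\lb})$ and $\Phi_\yb(\zb_{\lb})$ are fixed by the marginal constraints, this reduces to maximizing $2\,\mathrm{Re}\,\tr(\Phi_{\xb\yb}(\zb_{\lb}))$ over the cross term subject to the Schur-complement constraint. This is exactly the pointwise matrix optimization already solved in Proposition 1 of \cite{zorzi2020optimal} (and reused in Proposition \ref{prop_1} above), the only difference being that here the two positive definite matrices $\Phi_\xb(\zb_{\lb})$ and $\Phi_\yb(\zb_{\lb})$ are evaluated at a point of the discretized torus rather than the continuous one. Concretely, writing $\Phi_{\xb\yb}(\zb_{\lb})=\Phi_\xb(\zb_{\lb})^{1/2}K\Phi_\yb(\zb_{\lb})^{1/2}$ turns the constraint into $I-KK^*\geq 0$, i.e. $K$ is a contraction, and turns the objective into $2\,\mathrm{Re}\,\tr(\Phi_\yb(\zb_{\lb})^{1/2}\Phi_\xb(\zb_{\lb})^{1/2}K)$; the maximum over contractions is the nuclear norm of $\Phi_\yb(\zb_{\lb})^{1/2}\Phi_\xb(\zb_{\lb})^{1/2}$, which equals $\tr((\Phi_\yb(\zb_{\lb})^{1/2}\Phi_\xb(\zb_{\lb})\Phi_\yb(\zb_{\lb})^{1/2})^{1/2})$.

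Finally I would substitute the optimal value of each summand back, so that the $\lb$-th term becomes $\tr(\Phi_\xb(\zb_{\lb})+\Phi_\yb(\zb_{\lb})-2(\Phi_\yb(\zb_{\lb})^{1/2}\Phi_\xb(\zb_{\lb})\Phi_\yb(\zb_{\lb})^{1/2})^{1/2})$, and sum over $\lb\in\Zs_\Nb^d$ with the prefactor $1/|\Nb|$ to recover the claimed expression. I do not expect a genuine obstacle here: the periodic setting is if anything easier than the aperiodic one, since the finiteness of the grid makes the ``optimize pointwise, then aggregate'' step rigorous without any measurable-selection or integrability argument. The only substantive ingredient is the matrix-level identity of the previous paragraph, namely that the maximal correlation between two positive definite spectra, under the joint positivity constraint, is governed by operator/nuclear-norm duality; this is precisely what yields the geometric-mean term $(\Phi_\yb^{1/2}\Phi_\xb\Phi_\yb^{1/2})^{1/2}$ characteristic of the Hellinger distance.
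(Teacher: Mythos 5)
Your proof is correct and follows essentially the same route as the paper: the paper's own proof of Proposition \ref{prop_1_per} is a pointer to Proposition \ref{prop_1}, which in turn reduces the problem to the Schur-complement constrained maximization of $2\,\mathrm{Re}\tr\Phi_{\xb\yb}$ and defers to Proposition 1 of \cite{zorzi2020optimal}, precisely the contraction parameterization $\Phi_{\xb\yb}=\Phi_\xb^{1/2}K\Phi_\yb^{1/2}$ and nuclear-norm duality $\max_{\|K\|\leq 1}\mathrm{Re}\tr(AK)=\tr\bigl((AA^*)^{1/2}\bigr)$ that you carry out pointwise on the grid. Your write-up simply makes explicit the decoupling over $\Zs_\Nb^d$ and the matrix identity that the paper leaves implicit, and your observation that the finite grid removes any measurable-selection concerns is accurate.
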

\begin{proof} The proof is similar to the one of Proposition \ref{prop_1}.
\end{proof}

Similarly to the aperiodic case, we can generalize Problem (\ref{pb_trasp_per}) by considering a periodic weight function 
$\hb:=\{H_\tb,; \; \tb\in\Zs^d\}$, $H_\tb\in \Rs^{m\times m}$,  with period $\Nb$, that is
$$H_\tb=H_{(t_1,t_2,\ldots, t_{l-1},t_l+N_l,t_{l+1},\ldots ,t_d)}$$
for any $l=1\ldots d$. The corresponding multidimensional Fourier transform is 
$$ H(\zb_{\lb})=\sum_{\tb\in\Zs^d_\Nb} H_\tb \zb_{\lb}^{-\tb}, \; \;  \lb \in\Zs^d_\Nb.$$ 
Thus, we consider
\al{\label{pb_traspWper}d_\Omega(p_\xb,p_\yb)^2= \underset{p_{\xb,\yb}\in \Pc}{\inf} \{\Es[\| \hb\ast( \xb- \yb)_\tb\|^2]  \hbox{ s.t. (\ref{cond_marg1p})-(\ref{cond_marg2p}) hold}  \} }
where the symbol $\ast$ denotes the circular discrete convolution, that is 
$$ \left(\hb\ast x\right)_\tb:=\sum_{\ssb\in\Zs_\Nb^d} \hb_{\tb-\ssb} \xb_\ssb .$$ Now, the cost function is the variance of the periodic random field which is obtained by filtering through $\hb$ the discrepancy random field. Accordingly, we have\al{\Es &[\|  \hb\ast( \xb- \yb)_\tb\|^2]\nn\\  &=\tr \Es[  
 (\hb\ast \xb)_\tb(\hb\ast\xb)_\tb^T +(\hb\ast\yb)_\tb(\hb\ast\yb)_\tb^T\nn \\ &\hspace{0.3cm}-(\hb\ast\xb)_\tb(\hb\ast\yb)_\tb^T-(\hb\ast \yb)_\tb(\hb\ast\xb)_\tb^T]\nn\\
 &=\frac{1}{|\Nb|}\tr\sum_{\lb\in\Zs_\Nb^d}  \Omega(\zb_{\lb})[\Phi_\xb(\zb_{\lb}) +\Phi_\xb(\zb_{\lb})\nn\\ &\hspace{0.3cm}-\Phi_{\xb,\yb}(\zb_{\lb})-\Phi_{\yb,\xb}(\zb_{\lb})]\nn}
 where $\Omega(\zb_{\lb})=H(\zb_{\lb})H(\zb_{\lb})^*$.  
\begin{proposition} It holds that
\al{\label{Hell_w_per}d_\Omega(p_\xb,&p_\yb)^2=\frac{1}{|\Nb|}\tr\sum_{\lb\in\Zs_\Nb^d}(\Omega(\zb_{\lb})\Phi_\xb(\zb_{\lb})+\Omega(\zb_{\lb})\Phi_\yb(\zb_{\lb})\nn\\ &-2(\Phi_\yb^{1/2} \Omega(\zb_{\lb})\Phi_\xb(\zb_{\lb}) \Omega(\zb_{\lb})\Phi_\yb(\zb_{\lb})^{1/2})^{1/2} ) } that is $d(p_\xb,p_\yb)$ is the weighted Hellinger distance between $\Phi_\xb$ and $\Phi_\yb$ with weight function $\Omega$.  
\end{proposition}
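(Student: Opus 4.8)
The plan is to combine the reasoning of Proposition \ref{prop_1_per} with the weighting of (\ref{Hellinger_w}), reducing everything to a pointwise matrix optimization. First I would drop the terms depending only on $\Phi_\xb$ and $\Phi_\yb$, which are frozen by the marginal constraints (\ref{cond_marg1p})--(\ref{cond_marg2p}), and rewrite the positive semidefiniteness of the joint power spectral density at each grid point through its Schur complement. This turns (\ref{pb_traspWper}) into
\al{ &\underset{\Phi_{\xb\yb}}{\inf}\; -\frac{2}{|\Nb|}\tr\sum_{\lb\in\Zs_\Nb^d}\Omega(\zb_\lb)\Phi_{\xb\yb}(\zb_\lb) \nn\\ & \hbox{ s.t. } \Phi_\xb(\zb_\lb)-\Phi_{\xb\yb}(\zb_\lb)\Phi_\yb(\zb_\lb)^{-1}\Phi_{\yb\xb}(\zb_\lb)\geq 0,\; \; \forall\,\lb\in\Zs_\Nb^d. \nn }
Since both the objective and the constraints decouple across $\lb$, it suffices to solve, independently at each $\zb_\lb$, the finite-dimensional problem of maximizing $\mathrm{Re}\,\tr[\Omega\,\Phi_{\xb\yb}]$ subject to $\Phi_\xb-\Phi_{\xb\yb}\Phi_\yb^{-1}\Phi_{\yb\xb}\geq 0$.

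Next I would solve this matrix problem. Writing $X:=\Phi_\xb(\zb_\lb)$, $Y:=\Phi_\yb(\zb_\lb)$, $C:=\Phi_{\xb\yb}(\zb_\lb)$, $\Omega:=\Omega(\zb_\lb)$ and using $X,Y>0$, I would introduce the change of variable $C=X^{1/2}UY^{1/2}$, which ranges bijectively over all admissible $C$. The constraint then becomes $X^{1/2}(I-UU^*)X^{1/2}\geq 0$, i.e. $U$ is a contraction ($UU^*\leq I$), while the objective becomes $\mathrm{Re}\,\tr[MU]$ with $M:=Y^{1/2}\Omega X^{1/2}$. The maximum of $\mathrm{Re}\,\tr[MU]$ over all contractions equals the nuclear norm $\tr\,(MM^*)^{1/2}$, attained by aligning $U$ with the singular vectors of $M$; this duality is the one genuinely new computation with respect to Proposition \ref{prop_1_per}, the weight now entering through $M$ rather than reducing to the identity. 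Using $MM^*=Y^{1/2}\Omega X\Omega Y^{1/2}$, the optimal value at $\zb_\lb$ is $\tr\,(\Phi_\yb^{1/2}\Omega\Phi_\xb\Omega\Phi_\yb^{1/2})^{1/2}$.

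Finally I would substitute back, restore the discarded terms $\Omega\Phi_\xb+\Omega\Phi_\yb$ and the factor $1/|\Nb|$, and sum over $\lb$, which yields exactly (\ref{Hell_w_per}). I expect the main obstacle to be the matrix step rather than the bookkeeping: one must verify that the maximum is genuinely attained (the feasible set is compact, so it is) and, more delicately, that the pointwise maximizers $C(\zb_\lb)=X^{1/2}UY^{1/2}$ assemble into a legitimate cross power spectral density, i.e. that they inherit the conjugate symmetry in $\lb$ needed for the joint covariance field to be real. This last point follows from the corresponding symmetries of $\Phi_\xb$, $\Phi_\yb$ and $\Omega=HH^*$; apart from it, the argument is identical to those of Propositions \ref{prop_1} and \ref{prop_1_per}.
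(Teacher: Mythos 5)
Your proposal is correct and takes essentially the same route as the paper: the paper's proof is only a pointer to Proposition~\ref{prop_1}, which in turn reduces the problem via the Schur complement and defers to Proposition~1 of \cite{zorzi2020optimal}, and that argument is precisely your pointwise decoupling over $\lb$ followed by the change of variable $\Phi_{\xb\yb}=\Phi_\xb^{1/2}U\Phi_\yb^{1/2}$ and the maximization of $\mathrm{Re}\,\tr[MU]$ over contractions, giving the nuclear norm $\tr(\Phi_\yb^{1/2}\Omega\Phi_\xb\Omega\Phi_\yb^{1/2})^{1/2}$. Your closing verification that the pointwise optimizers inherit the conjugate symmetry needed for a legitimate real joint field is a detail the paper leaves implicit, not a divergence in method.
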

\begin{proof}  The proof is similar to the one of Proposition \ref{prop_1}.
\end{proof}

 \section{Spectral morphing}\label{sec_3}
 Consider a zero mean Gaussian (aperiodic) random field whose description slowly varies over time. Moreover, suppose that in a sufficiently small time interval $[k-\sigma,k+\sigma]$, for some $\sigma\in\Ns$, the random field can be considered to be stationary. Therefore, at time $k$ it can be approximately described by a power spectral density, say $\Phi_k(e^{j\thb})$. It is then natural to wonder how to construct a smooth interpolation between nearby power spectral densities, e.g.  $\Phi_{k-1}(e^{j\thb})$ and $\Phi_k(e^{j\thb})$. The latter task is referred to as spectral morphing. A possible smooth interpolation is given by the geodesic defined by the weighted Hellinger distance (\ref{Hellinger_w}) on the manifold of the   multivariate and multidimensional power spectral densities. For simplicity, consider the nearby spectral densities at $k=0$ and $k=1$, then we have 
 \al{ \label{H_W_eucl}d_\Omega& (\Phi_0,\Phi_1)^2\nn\\ &=\tr\int_{\Ts^d} (\Omega\Phi_0+\Phi_1-2(\Phi_0^{1/2} \Omega\Phi_1 \Omega\Phi_0^{1/2})^{1/2} )\dm\nn\\
 &=\tr\int_{\Ts^d}\| \Phi_0^{1/2}-\Phi_1^{1/2} U_\Omega\|^2_{\Omega}\dm} where
 \al{U_\Omega&(e^{j\thb})=\Phi_1^{-1/2}(e^{j\thb})\Omega^{-1}(e^{j\thb}) \Phi_0^{-1/2}(e^{j\thb})\nn\\ &\times (\Phi_0^{1/2}(e^{j\thb})\Omega(e^{j\thb})\Phi_1(e^{j\thb})\Omega(e^{j\thb}) \Phi_0^{1/2}(e^{j\thb}))^{1/2}}
 and $U_\Omega(e^{j\thb})U_\Omega(e^{j\thb})^*=I$, i.e. $U_\Omega$ is an all-pass function. In view of (\ref{H_W_eucl}), $d_\Omega(\Phi_0,\Phi_1)$ is the weighted Euclidean distance between the spectral factors $\Phi_0^{1/2}(e^{j\thb})$ and $\Phi_1^{1/2}(e^{j\thb})U_\Omega(e^{j\thb})$ and thus the corresponding geodesic is the line segment connecting them. Accordingly, the geodesic on the manifold of the multivariate and multidimensional   power spectral densities connecting $\Phi_0(e^{j\thb})$ and $\Phi_1(e^{j\thb})$ is 
 \al{ \label{geod}\Phi_\tau&(e^{j\thb})=[(1-\tau)\Phi_0(e^{j\thb})^{1/2}+\tau \Phi_1(e^{j\thb})^{1/2}U_\Omega(e^{j\thb})]\nn\\ &\times[(1-\tau)\Phi_0(e^{j\thb})^{1/2}+\tau \Phi_1(e^{j\thb})^{1/2}U_\Omega(e^{j\thb})]^*}
 with $\tau\in [0,1]$.
In the special case that $\Omega (e^{j\thb})=I$, i.e. when we consider the Hellinger distance in (\ref{Hell_classical}), the all-pass function used to form the geodesic becomes 
\al{U_I(e^{j\thb})=&\Phi_1^{-1/2}(e^{j\thb})\Phi_0^{-1/2}(e^{j\thb})\nn\\ &\times (\Phi_0^{1/2}(e^{j\thb})\Phi_1(e^{j\thb})\Phi_0^{1/2}(e^{j\thb}))^{1/2}}
which is the one considered in \cite{NING_2013}. It is also worth noting that in the case that $m=1$, i.e. we consider the manifold of the univariate and multidimensional spectral densities, then $U_\Omega(e^{j\thb})=U_I(e^{j\thb})$ that is (\ref{Hellinger_w}) and (\ref{Hell_classical}) define the same geodesic.

In the periodic case, the weighted Hellinger distance in (\ref{Hell_w_per}) 
defines the following geodesic on the manifold of the multivariate multidimensional power spectral densities:
\al{ \Phi_\tau&(\zb_{\lb})=[(1-\tau)\Phi_0(\zb_{\lb})^{1/2}+\tau \Phi_1(\zb_{\lb})^{1/2}U_\Omega(\zb_{\lb})]\nn\\ &\times[(1-\tau)\Phi_0(\zb_{\lb})^{1/2}+\tau \Phi_1(\zb_{\lb})^{1/2}U_\Omega(\zb_{\lb})]^*}
 with $\tau\in [0,1]$ and $U_\Omega(\zb_{\lb})$ is an all-pass function, i.e. $U_\Omega(\zb_{\lb})U_\Omega(\zb_{\lb})^*=I$ for any $\lb\in\Zs_\Nb^d$, defined as follows:
 \al{U_\Omega&(\zb_{\lb})=\Phi_1^{-1/2}(\zb_{\lb})\Omega^{-1}(\zb_{\lb}) \Phi_0^{-1/2}(\zb_{\lb})\nn\\ &\times (\Phi_0^{1/2}(\zb_{\lb})\Omega(e^{j\thb})\Phi_1(\zb_{\lb})\Omega(\zb_{\lb})\Phi_0^{1/2}(\zb_{\lb}))^{1/2}.}
 \section{Example}\label{sec_4}
 We consider two zero mean Gaussian random fields with $d=2$ and $m=2$ having spectral density $\Phi_0(e^{j\thb})=W_1(e^{j\thb})W_1(e^{j\thb})^*$ and $\Phi_1(e^{j\thb})=W_2(e^{j\thb})W_2(e^{j\thb})^*$, respectively. More precisely, 
 \al{W_0(\zzb)&=\left[\begin{array}{cc} \frac{1}{1-\ip{\ab_0}{\zzb^{-1}}} &  \frac{1}{1-\ip{\bb_0}{\zzb^{-1}}}  \\0 & \frac{1}{1-\ip{\gb_0}{\zzb^{-1}}}  \end{array}\right]\nn\\
 W_1(\zzb)&= \left[\begin{array}{cc} \frac{1}{1-\ip{\ab_1}{\zzb^{-1}}}  &  \frac{1}{1-\ip{\bb_1}{\zzb^{-1}}} \\0 &\frac{1}{1-\ip{\gb_1}{\zzb^{-1}}} \end{array}\right]}
 where $\rho=0.475$, \al{\ab_0&=\rho[\,e^{jpi/2} \; e^{j\pi/2}\,]^T, \; \; \ab_1 =\rho[\,e^{j3\pi/4} \; e^{j\pi/2}\,]^T, \nn\\ \bb_0&=\rho[\, e^{j\pi/3}\; e^{j3\pi/4}\,]^T,\; \; \bb_1=\rho[\, e^{j\pi/2}\; e^{j3\pi/4}\,]^T,\nn\\ \gb_0&=\rho[\, e^{j3\pi/4}\; e^{j\pi/3}\,]^T,  \; \; \gb_1=\rho[\,e^{j3\pi/4} \;e^{j\pi} \,]^T}
 and, with some abuse of notation, $\ip{\bar\ab}{\zzb^{-1}}:=\bar\alpha_1z_1^{-1}+\bar\alpha_1z_2^{-1}$ with $\bar\ab=[\,\bar\alpha_1\;\bar\alpha_2\,]^T$ and $\zzb=[\,z_1\;z_2\,]^T$.

Figure \ref{fig} shows the corresponding geodesic defined in (\ref{geod}) with the constant weight function
\al{\label{def_OM} \Omega(e^{j\thb})=\left[\begin{array}{ccc}1 & -0.99   \\ -0.99 & 1  \end{array}\right]} for $\tau=0$ (first row), $\tau=0.33$ (third row), $\tau=0.67$ (fifth row) and $\tau=1$ (sixth row). Moreover, we also compare it with the geodesic obtained with $\Omega(e^{j\thb})=I$ for $\tau=0$ (first row), $\tau=0.33$ (second row), $\tau=0.67$ (fourth row) and $\tau=1$ (sixth row). We can notice that the two geodesics are visibly different in regard to the real part of the entry in position $(1,2)$. Accordingly, we can design $\Omega $ in such a way to induce specific properties on the corresponding geodesic.

 \begin{figure*}
 \includegraphics[width=\textwidth]{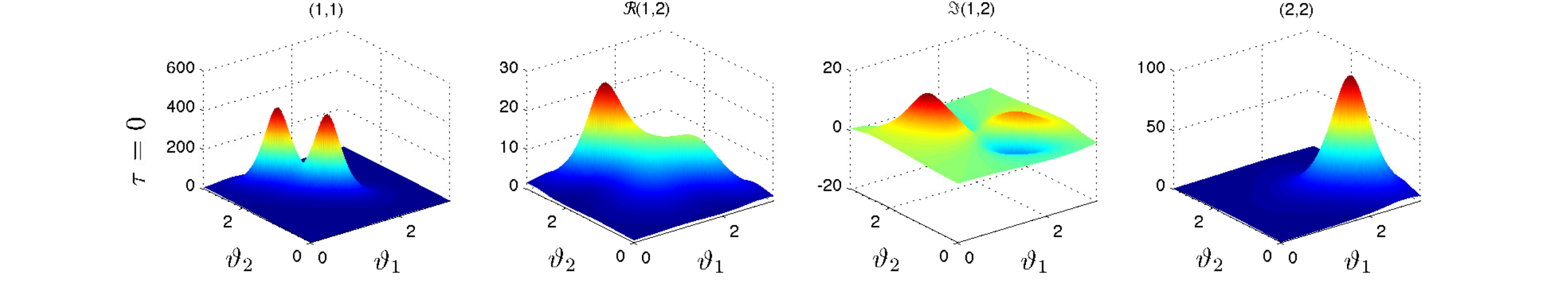}
  \includegraphics[width=\textwidth]{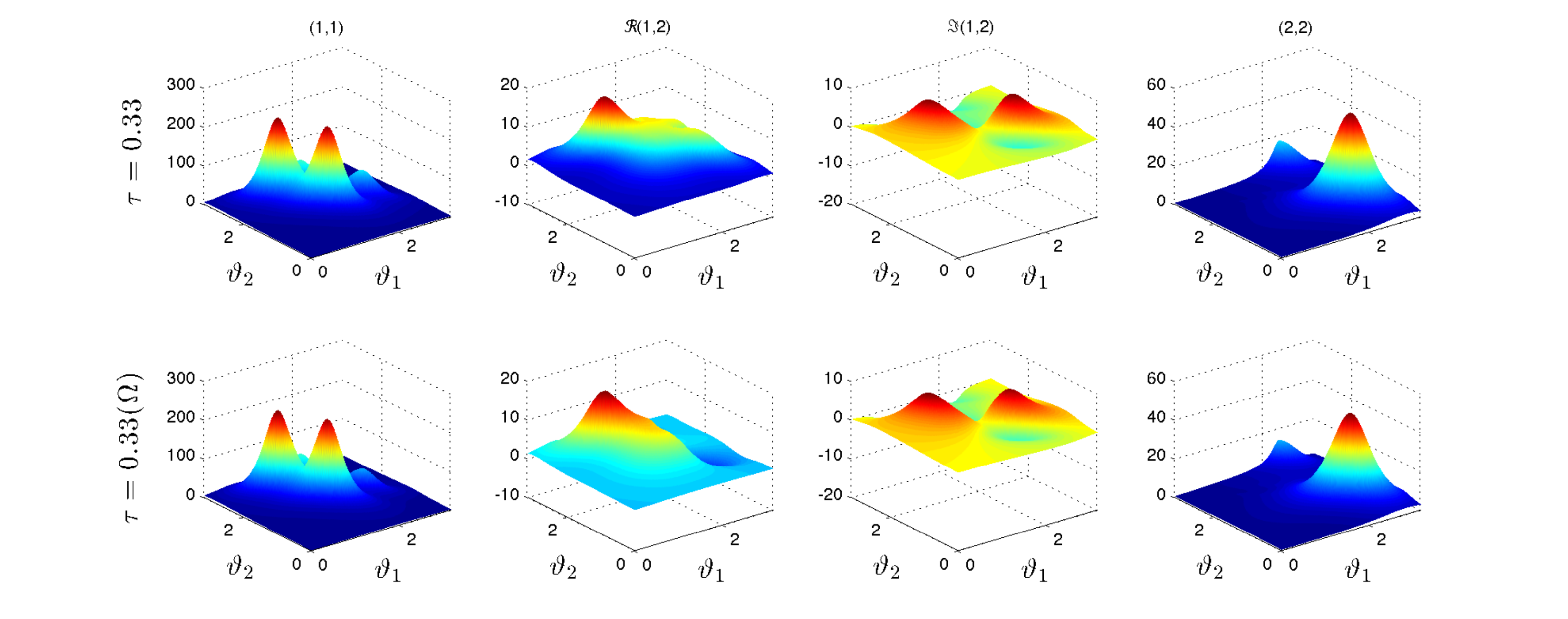}
  \includegraphics[width=\textwidth]{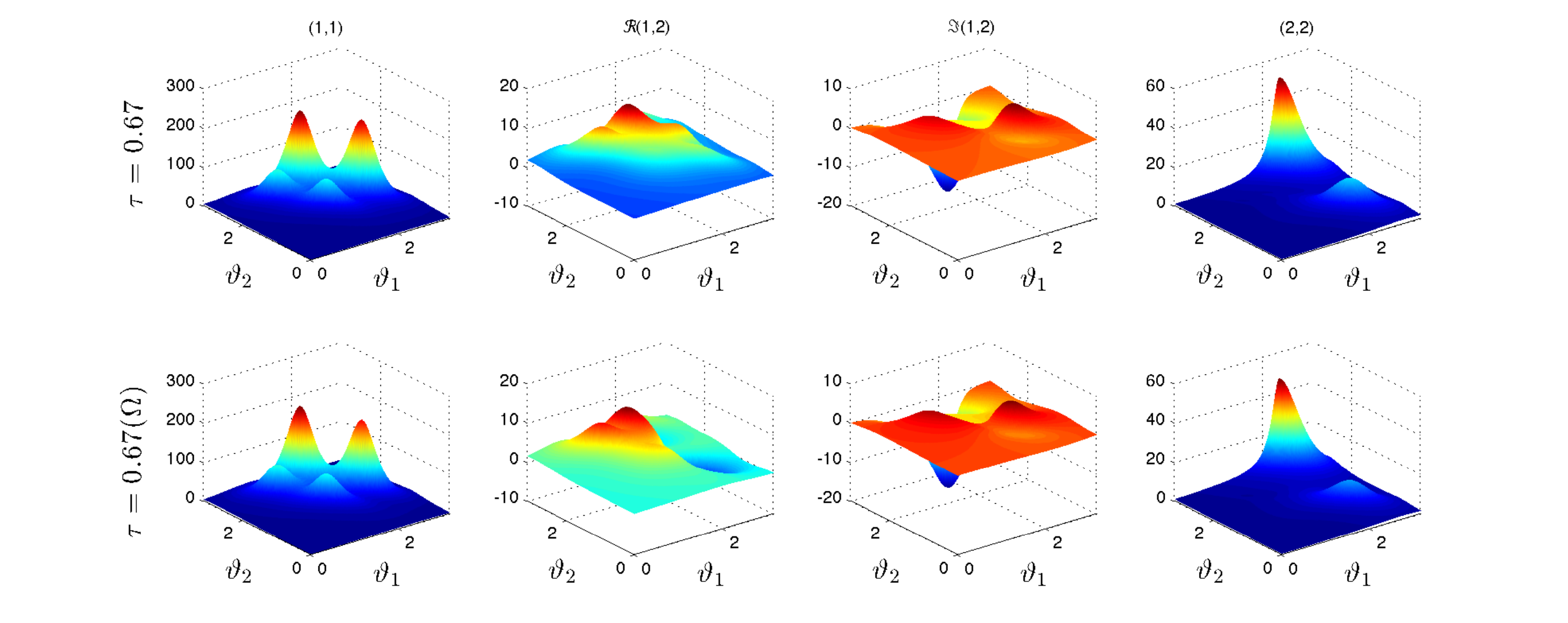}
      \includegraphics[width=\textwidth]{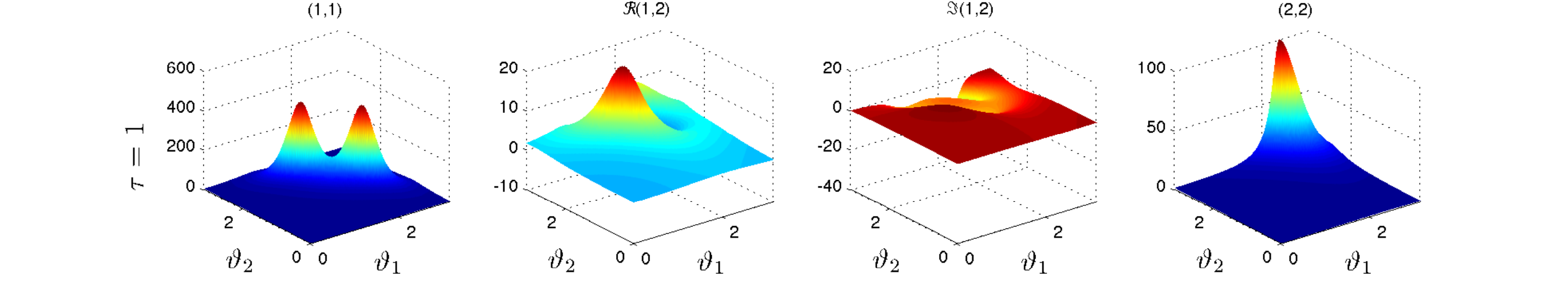}
   \caption{The path $\Phi_\tau(e^{j\thb})$ between $\Phi_0(e^{j\thb})$ and $\Phi_1(e^{j\thb})$ for $\tau\in\{0, 0.33, 0.67, 1\}$ using $\Omega$ defined in (\ref{def_OM}) -- rows one, three, five and six -- and $\Omega(e^{j\thb})=I$ -- rows one, two, four and six. The first and the last column show the entry of the spectral densities in position $(1,1)$ and $(2,2)$, respectively. The second and the third column show the real and the imaginary part of the entry of the spectral densities  in position $(1,2)$.}\label{fig}
 \end{figure*}
 \section{The general case} \label{sec_4b}
 The OTP's analyzed before consider $\Pc$
 as the set of Gaussian joint probability densities. This hypothesis, however, can be weakened. Notice that a Gaussian process is a particular elliptical process. More precisely, 
we can take $\Pc$ as the set of the joint probability densities such that $[\,\xb^T \; \yb^T\,]^T$ is an elliptical stationary process having zero mean and with joint power spectral density bounded and coercive. Accordingly, $\xb$ and $\yb$ are elliptical processes with zero mean. We conclude that the same reasoning and thus same results hold also in this case.
 
 \section{Conclusion}\label{sec_5}
In this paper we have introduced the optimal transport problem between Gaussian aperiodic/periodic Gaussian random fields. The solution to these problems leads to a weighted Hellinger distance between multivariate and multidimensional power spectral densities. Such a distance can be characterized in terms of spectral factors. In the unidimensional case, the Hellinger distance can be defined in such a way to have the freedom in choosing one  of these two spectral factors, see \cite{Hellinger_Ferrante_Pavon}; in particular, it is always possible to choose  a rational spectral factor if the corresponding spectral density is rational. It is worth stressing that this last fact in the multidimensional case, however, is no longer true in general,
\cite{geronimo2006factorization,geronimo2004positive}.

Finally, we have shown that the weighted Hellinger distance defines a geodesic, depending on the weight function, on the manifold of the multivariate and multidimensional spectral densities.

\end{document}